\newenvironment{demo*}{\vspace{3mm}\noindent{\bf Proof.}}{\hfill $\Box$ \vspace{3mm}}
\newtheorem{thm}{Theorem}
\numberwithin{equation}{section} \numberwithin{thm}{section}
\numberwithin{lema}{section}
\def\cen{\centerline}
\def\E{\text{E}}
\def\rd{\text{d}}
\def\th{{\theta}}
\def\sig{\sigma}
\def\vv{\varepsilon}
\def\Var{\text{Var}}
\def\del{\delta}
\def\la{\lambda}
\def\nn{\nonumber}
\def\vs{\vskip .12in}
\def\non{\noindent}
\begin{document}

\baselineskip=22pt

    \title{ Empirical Likelihood for Right Censored Lifetime Data }
    \author{ Shuyuan He$^{1*}$,  Wei Liang$^{2*}$, Junshan Shen$^{3*}$
and Grace Yang $^{4}$
    \footnote{Research of authors 1, 2 and 3 were  Supported by  National Natural Science Foundation of
    China(11171230, 10801003). Research of author 4 was supported by the US National Science Foundation, while working at the Foundation.
       }    \\
 Capital Normal University$^1$, 
Xiamen University$^2$, \\ 
Peking University$^3$, 
University of Maryland$^4$
\\}

\maketitle

\cen{{\large {\bf Abstract}}} \

This paper considers the empirical likelihood (EL)  construction of
confidence intervals for a linear functional $\th$ based on right
censored lifetime data. Many of the results in literature show that
$-2\log$(empirical likelihood ratio) has a limiting
scaled-$\chi^2_1$ distribution, where the scale parameter is a
function of the unknown asymptotic variance. The scale parameter has
to be estimated for the construction.  Additional estimation would
reduce the coverage accuracy for $\th$. This diminishes a main
advantage of the EL method for censored data. By utilizing certain
influence functions  in an estimating equation, it is shown that
under very general conditions, $-2\log$(EL ratio) converges weakly
to a standard $\chi^2_1$ distribution and thereby eliminates the
need for estimating the scale parameter. Moreover, a special way of
employing influence functions eases the otherwise very demanding
computations of the EL method. Our approach yields smaller
asymptotic variance of the influence function than those comparable
ones considered by Wang \& Jing (2001) and Qin \& Zhao (2007). Thus
it is not surprising that confidence intervals using influence
functions give a better coverage accuracy as demonstrated by
simulations.

\noindent{\it Key words and phrases.}
    Empirical likelihood, Right censored lifetimes, Influence function, Parameter
    estimation, Confidence intervals
 \vspace{0.5cm}

\noindent {\it AMS} 1991 {\it subject classifications}.
    Primary 62H99; Secondary 62H05.\vspace{0.5cm}

\setcounter{equation}{0}
\renewcommand{\theequation}{1.\arabic{equation}}


\section{Introduction}
\def\theequation{1. \arabic{equation}}
\setcounter{equation}{0}

Let $Y$ be a non-negative random variable with distribution function
$F(y) = P[Y \leq y]$. Let $C$ be a nonnegative random variable with
distribution function $G(x)= P[C \leq x]$ and $C$ is independent of
$Y$. Instead of $Y$, we observe  $Z=\min(Y,C)$ and the indicator
$\delta=I[Y\leq C]$ of the event $[Y \leq C].$
Both $F$ and $G$ are assumed continuous but unknown. In this paper
we use the empirical likelihood method with right-censored data to
study the problem of constructing  confidence intervals for $\th$, a
functional of the distribution $F$ defined by $\E g(Y,\th)=0$. For
example, if $g(x,\th)= \xi(x) -\th$, for some function $\xi$ we get
\begin{equation}
\th= \E\,\xi(Y)=\int_0^\infty \xi(x) \,\rd F(x).  \label{1.1}
\end{equation}

The problem of estimating $\th$  in (1.1) with a sample of $n$
i.i.d. observations of $(Z, \del)$ has been studied by many authors.
If $\xi(Y) = I[Y \leq y]$ for a fixed $y$, then $\th = F(y)$. A
well-known nonparametric maximum likelihood and asymptotically
optimal estimator of $F(y)$ is the Kaplan-Meier (KM) estimator $F_n$
as defined in (2.3).
A natural estimator of $\th$ is
\begin{equation}
\th_n = \int_0^\infty \xi(s) \,\rd F_n(s). \label{1.2}
\end{equation}

For an arbitrary function $\xi$, several authors, e.g. Yang (1994)
have shown that under the condition of finite second moment,
\begin{equation}
\int_0^{\infty}\frac{\xi^2(s)}{\overline G(s)} \,\rd F(s) <\infty,
\label{1.3}
\end{equation}
the asymptotic distribution of $\sqrt{n}(\th_n-\th)$, as $n$ goes to
infinity, is  normal $N(0,\,\sig^2)$,  where
\begin{equation}
\sig^2=\int_0^{\infty} \frac{(\overline F(s)  \xi(s) -
\psi(s))^2}{\overline F^2(s) \overline G(s)} \,\rd F(s), \
\psi(s)=\int_s^\infty \xi(x) \,\rd F(x), s\geq 0, \label{1.4}
\end{equation}
and $\overline G =1-G, \overline F=1-F$.

Confidence intervals for $\th$ can be constructed using the
asymptotic normal distribution $N(0,\,\sig^2)$. Alternatively, the
EL method can be used as to be investigated in this paper. Employing
either method, one needs to deal with a rather complicated form of
the asymptotic variance $\sig^2$. Among other things, it is
computationally demanding.

To use the normal distribution $N(0,\,\sig^2)$, it is necessary to
estimate the unknown variance $\sig^2$. Stute (1996) proposed a
jackknife estimator to replace $\sig^2$ in the calculation. Although
any consistent estimator $\sig_n^2$ of $\sig^2$ can be used, the
convergence rate of $\sig_n^2$ is generally unknown. Substitution by
the estimate $\sig_n^2$ tends to reduce the coverage accuracy for
$\th$ as compare to the case of known $\sig^2$.

The usefulness of the EL method for constructing  confidence
interval/regions has been well established in a wide variety of
situations, see e.g., DiCiccio et al. (1991) and Chen (1994), and an
extensive literature review in Owen (2001) to that day.  Let
$R(\th)$ denote the EL ratio function of a one-dimensional
parameter $\th$ for $n$ i.i.d ``complete'' observations. Owen (1988)
proved that under certain regularity conditions, $-2\log R(\th)$
converges to
a chi-squared distribution with one degree of freedom. 
The EL method gives confidence intervals for $\th$ as $\{\th :\,
-2\log R(\th) \leq c_{1-\alpha} \} $, where $c_{1-\alpha}$ is the
$(1-\alpha)$th quantile of the $\chi_1^{2}$ distribution. Here the
construction of confidence intervals does not require estimation of
asymptotic variance. In view of a complicated variance formula in
(1.4), this would have provided a welcome method for censored data.
However, as far as we know, for censored data, the asymptotic
standard chi-squared distribution holds only in some special cases
see, e.g. Owen (Chapter 6, 2001). More recent literature shows that
most of the asymptotic distributions involve weights which are
functions of unknown variances or covariance matrices. This is the
case, for example, in the following papers.  Li and Wang (2003)
studied right-censored regression models, Ren (2008) used weighted
EL under a variety of censoring models,  Wang \& Jing (2001) and Qin
\& Zhao (2007) estimated  functionals $\th$, and  Hjort\& McKeague
\& van Keilegom, in their extension of the scope of the EL method
(2009), obtained an asymptotic distribution (Theorem 2.1) which is a
sum of weighted chi-squared distributions with unknown weights.
Therefore, using these results to construct confidence intervals for
$\th$ still  require an additional estimation of the unknown
$\sig^2$. This diminishes a main advantage of the EL method for
censored data.

The EL ratio $R(\th)$ is obtained by utilizing auxiliary information
on $\th$ through a set of estimating equations. In this paper, we
show that by using certain influence functions with a special
construction of estimating equations in the EL ratio, the asymptotic
distribution of  -2$\log R(\th)$ of the functional $\th$ in (1.1) is
a standard $\chi_1^2$ without involving any unknown scale parameter.
Our approach transfers the problem of estimating $\sig^2$ to the
influence functions. As a result, it also significantly simplifies
the often intensive computations of the EL method for censored data.

Our work is motivated by the work of Wang \& Jing (2001) and Qin \&
Zhao (2007). Wang \& Jing (2001) obtained an EL ratio by first
finding an estimating equation for a certain complete sample and
then modifying the estimating equation for the right censored
sample. The resulting estimating equation is a sum instead of a
product (inherent of the product limit estimator). With this
approach, Wang \& Jing (2001) use the estimating function
 $ M_1(Z,
\delta, \th)$ for  $\th$ (see (3.7) ) and Qin \& Zhao (2007)  use $
M_2(Z, \delta, \th)$ (see (3.8)) for estimating the mean residual
life $\E (Y-t_0|Y\geq t_0)$ at age $t_0.$  However, both of these
papers obtain an asymptotic scaled $\chi^2_1$ distribution for
$-2\log R(\th).$




Instead of $M_1$ and $M_2$, we use influence functions. We compute
the influence functions $W(Z, \delta, \theta)$ of
$$\mu_n=\int_0^\infty g(x,\th)\,\rd F_n(x),$$
as defined by (3.6),
where $F_n$ is the Kaplan-Meier estimator. The influence functions
$W's$ are to be utilized to construct an estimation function for the
EL method.  Numerous examples of the function $g$ are given in
Section 2.




The paper is organized as follows. Preliminaries assumptions and
examples of $\th$ and $g$ are given in Section 2. The influence
function $W(Z,\del, \th)$ are given in Section 3.  It is shown in
Theorem 3.1 that asymptotically $\sqrt{n}(\mu_n-\mu)$ is a partial
sum of $n$ independent influence functions $W(Z_j,\del_j, \th)$ (an
IID representation), or is asymptotically linear. Here $\mu$ denotes
$E\,g(Y,\th)$ and the
 the condition $\E\,g(X,\th) = 0$ is not imposed in Section 3.  These results are general for any $\xi(x)$
having finite second moment (1.3)
 and  no restrictions are placed on the upper boundaries of $X$ and $C$.

An IID representation of the Kaplan-Meier estimator has been
obtained by many authors e.g., Lo \& Singh (1986), Stute \& Wang
(1993) and Chen \& Lo (1997) using different approaches, under
different conditions and in different forms. See Yang (1997) and
references therein. Here, we use an influence function $W(Z,\del,
\th)$ of $\th_n$ in (1.2)
obtained in He \& Huang(2003). We show that the variance of  the
influence function is smaller than that of $M_1$ and $M_2$
(see Remark 3.1 in Section 3), which results in an improvement  of
the asymptotic coverage accuracies of $\th$. In Section 4,
estimation of the influence functions is carried out in Theorem 4.1.
The weak convergence of $-2\log R(\th)$ to the  standard $ \chi_1^2$
distribution without any scale parameter is proved in Theorem 4.2
which justifies the EL construction of confidence intervals for
censored data. In Section 5,
 simulation comparison of the new method with that of the
 scaled $\chi_1^2$ distribution is presented.  The amount of improvement depends on the form of $\th$. For survival function or mean, the coverage ratios computing from the traditional normal
approximation and the EL method are about the same. The EL method
performs better for more complicated $\th$.  Most of the proofs are
relegated to the Appendix.

\section{Preliminaries, Assumptions and Examples}
\def\theequation{2. \arabic{equation}}
\setcounter{equation}{0}

For any right continuous monotone function $h({x})$, let $h({x-})$
or $h_{-}(x)$ denote the left continuous version of $h({x})$ and the
curly brackets $h\{{x}\}$ denote the difference $h({x})-h({x-})$.
Then $h\{x\}=\rd h(x)$. For any cumulative distribution function
$F$, let $\overline F=1-F$. \begin{normalsize} {\Large •}
\end{normalsize}Assume that
\begin{equation}
F(x)=P(Y\leq x),\quad G(x) = P(C \leq x)  \ \ \text{and} \ \
H(x)=P(Z\leq x) \label{2.1}
\end{equation}
are continuous cumulative distributions of $Y$, $C$ and
$Z=\min(Y,C)$, respectively. Let $[0, b_H]$ be the range of $H$,
where
$$b_H =\sup\{x : H(x) < 1\}.$$
$b_F$ and $b_G$ are similarly defined for $F$ and $G$. Then
$b_H=\min( b_F, b_G)$.

Given a sample of $n$ i.i.d. random vectors $(Z_i, \del_i)$,
$i=1,2,\cdots,n$, of $(Z, \del)$, their empirical distribution
functions are given by:
\begin{eqnarray}
&&H_n^1(x) = \frac 1n \displaystyle\sum^n_{j=1} I[Z_j \leq x,\delta_j=1], \notag \\
&&H_n^0(x) = \frac 1n \displaystyle\sum^n_{j=1} I[Z_j \leq x,\delta_j=0], \label{2.2} \\
&&H_n(x) = H_n^0(x)+H_n^1(x)=\frac 1n \displaystyle\sum^n_{j=1}
I[Z_j \leq x]. \notag
\end{eqnarray}
Asymptotic optimal nonparametric estimators of $F(x)$ and $G(x)$ are
the well-known Kaplan-Meier estimators given by
\begin{equation}
 F_n(x) = 1 - \prod_{s \leq x} \left[ 1 - \frac{H^1_n\{s\}}{{\overline H}_n(s-)} \right] \hbox{ and } \ \
 G_n(x) = 1 - \prod_{s\leq x}\left[ 1 - \frac{H^0_n \{s\} }{{\overline H}_n(s-)} \right],
\label{2.3}
\end{equation}
respectively, where an empty product is set equal to one. It can be
checked that for all $x$,
\begin{equation}
{\overline H}_n(x)={\overline F}_n(x){\overline G}_n(x). \label{2.4}
\end{equation}
Applying (2.3) and (2.4)
we get
\begin{eqnarray}
\rd F_n(x)
 =F_n(x)-F_n(x-)
 =\overline F_n(x-)\frac{\rd H_n^1(x)}{{\overline F}_n(x-){\overline G}_n(x-)}. \label{2.5}
\end{eqnarray}
It follows that
\begin{equation}
\rd H_n^1(x)={\overline G}_n(x-)\,\rd F_n(x),  \quad \rd
H_n^0(x)={\overline F}_n(x-)\,\rd G_n(x). \label {2.6}
\end{equation}

Put
\begin{eqnarray}
&& H^0(x)=P(Z\leq x, \delta=0), \quad  H^1(x)=P(Z\leq x, \delta=1), \mbox{and}  \label{2.7}\\
&&  \overline H(x)=P(Z > x). \nn
\end{eqnarray}
Then
\begin{eqnarray}
&& H^0(x)=\E\,H_n^0(x)= \int_0^x \overline F(s) \,\rd G(s),  \nn\\
&& H^1(x)=\E\,H_n^1(x) = \int_0^x \overline G(s) \,\rd F(s),\label{2.9}\\
&& \overline H(x) =\E\,\overline H_n(x)= \overline F(x) \overline
G(x). \notag
\end{eqnarray}
Here and after, the integral sign $\int^b_a$ stands for
$\int_{(a,b]}$ and $\int$ stands for $\int_0^\infty$.

\vs \non{\bf Examples of $\th$ and $g(x,\th)$} \vs

1. $g(x,\th)= I[x>y]-\th$ with $y$ fixed. Then
$$ \E\,g(Y,\th) = \int (I[x>y]-\th) \,\rd F(x)=0.$$
Solving this equation yields $\th = \overline F(y)$, the survival
function of $Y$.

2. $g(x,\th)= x^k-\th$. Then $\th = \E\,Y^k$, the $k$th moments of
$Y$.

3. $g(x,\th)= (x-t_0-\th)I[x\geq t_0]$ with $t_0$ fixed. Then
$$\th = \E (Y-t_0 |Y\geq t_0)= \frac{\E(Y-t_0)I[Y\geq t_0]}{P(Y\geq t_0)},$$
the mean residual life of $Y$.

4. $g(x,\th)= x(I[x> y] - \th) $. Then
$$\th =\frac{1}{\E\,Y}\int_y^\infty s\,\rd F(s),$$
the length biased survival function of $Y$. See Vardi (1982), for
example.

5. $g(x,\th)= x^2 - \th x$. Then
$$\th = \frac{1}{\E\,Y}\int_0^\infty x^2 \,\rd F(x),$$
the mean of the length-biased lifetime.

6. $g(x,\th)= x^2 - 2\th x$. Then
$$\th = \frac{1}{2\E\,Y}\int_0^\infty x^2 \,\rd F(x),$$
the mean of the length biased residual lifetime.

7. $g(x,\th)= I[x\leq \th]- p$ with $p\in (0,1)$. Then
$\th=F^{-1}(p)$, the $p$th quantile of $Y$.

Examples (4)-(6) often appear in renewal processes and their
applications. \vs

\section{Influence function of $\mu_n$}
\def\theequation{3. \arabic{equation}}
\setcounter{equation}{0}

Throughout Section 3, $\th$ is a fixed value. Then it is
 convenient to suppress $\th$ in the exposition, by setting   $\xi(x)=g(x,\th)$,
$\mu=\E\,g(X,\th)$,  and
\begin{eqnarray}
\mu_n = \int \xi(x) \,\rd F_n(x) = \int \frac{\xi(x)}{\overline
G_n(x-)} \,\rd H_n^1(x). \label{3.1}
\end{eqnarray}
Likewise, set $W = W(Z, \delta) =  W(Z, \delta, \th).$

In Theorem 3.1, we prove that the estimator $\mu_n$ for  $\mu$ is
asymptotic linear. That is, there is a function $W=W(Z,\del)$, such
that $\E\,W=0$, $\Var(W)<\infty$ and
$$\sqrt{n}(\mu_n-\mu)=\frac{1}{\sqrt{n}}\sum_{i=1}^n W(Z_i,\del_i) + o_p(1).$$
The function $W(Z,\del)$ is defined with respect to the true
distributions $(F,G)$. Following literature, we call $W(Z_i,\del_i)$
the $i$-th influence function of $\mu_n$. See, for e.g. van der
Vaart (1998) or Tsiatis (2006).

Theorem 3.1 will be proved by first establishing a similar result
for the truncated $W(Z_i, \delta_i)$ as defined in (3.3).
Let $\xi_b(x) =\xi(x)I[ x\leq b]$ be the restriction of $\xi(x)$ on
$(-\infty,b]$, where $b < b_H$ is an arbitrarily chosen constant. By
similar truncation,  put
\begin{equation}
\mu_b=\int \xi_b(x) \,\rd F(x), \ \ \psi_b(s)=\int_{x\geq s}
\xi_b(x) \,\rd F(x), \ \ {\overline
\delta}_i=I[Y_i>C_i]=1-\delta_i.\label{3.2}
\end{equation}
We consider the i.i.d. random variables,
\begin{equation}
W_i(b)
 =\frac{\xi_b(Z_i)\delta_i}{\overline G(Z_i)}-\mu_b
 +\frac{{\overline \delta}_i}{{\overline H}(Z_i)}\psi_b(Z_i)
 -\int\psi_b(s)\frac{I[Z_i\geq s]}{\overline H^2(s)}\,\rd H^0(s),
\label{3.3}
\end{equation}
for $i=1,\cdots,n.$

Under finite variance condition (1.3),
it can be calculated that,
\begin{eqnarray}
\E\,\frac{\xi_b(Z_i)\delta_i}{\overline G(Z_i)}
 &=&\mu_b, \ \ \E\,W_i(b)=0, \notag \\
\text{Var}(W_i(b))
 &=&\int\frac{\xi_b^2(s)}{\overline G(s)}\,\rd F(s)-\mu_b^2-\int\frac{\psi_b^2(s)}{\overline F(s)\overline G^2(s)}\,\rd G(s). \label{3.4}
\end{eqnarray}
As $b$ approaches the upper bound $ b_H$, we have
\begin{equation}
\mu_b \to \mu, \quad \psi(s) =\lim_{b\to b_H} \psi_b(s) =
\int_{x\geq s} \xi(x) \,\rd F(x), \quad \mbox{and} \label{3.5}
\end{equation}

\begin{equation}
 W_i=\lim_{b\to b_H} W_i(b)
    =\frac{\xi(Z_i)\delta_i}{\overline G(Z_i)}-\mu
     +\frac{{\overline \delta}_i}{{\overline H}(Z_i)}\psi(Z_i)
     -\int\psi(s)\frac{I[Z_i\geq s]}{\overline H^2(s)}\,\rd H^0(s), \label{3.6}
\end{equation}
for $i = 1, \cdots, n$. The $W_i's$, for $i = 1, \cdots, n,$ are
i.i.d. random variables.

Wang \& Jing (2001) use the estimating function based on
\begin{equation}
M_1(Z, \delta, \th) =\frac{\xi(Z)\delta}{\overline G(Z)} -\th
\label{1.6}
\end{equation}
to estimate $\th$ in (1.1).
Qin \& Zhao (2007) used the estimating function based on
\begin{equation}
 M_2(Z, \delta, \th) =\frac{g(Z,\th)\delta}{\overline G(Z)},   \label{1.7}
\end{equation}
where $g(x,\th)=(x-t_0-\th)I[x\geq t_0]$,  to estimate the mean
residual life $\th=\E (Y-t_0|Y\geq t_0)$ at a specified age $t_0.$
This case is covered in our formulation, see example (3) in Section
2. Comparing with $M_1$ and $M_2$, our $W_i's $ contain two
additional terms. Note that  $W_i's $ are not observable random
variables and whose estimation will be addressed in Section 4.

Under finite variance condition (1.3),
applying the dominated convergence theorem and the
Lebesgue-Stieltjes integration by parts,  we obtain
\begin{eqnarray}
\E\frac{\xi(Z_i)\delta_i}{\overline G(Z_i)}&=&\mu, \ \ \E\,W_i =0, \label{3.7} \\
\text{Var}(W_i)
 &=&\int \frac{\xi^2(s)}{\overline G(s)}\,\rd F(s) -\mu^2 -\int\frac{\psi^2(s)}{\overline F(s) \overline G^2(s)}\,\rd G(s) \label{3.7} \\
 &=&\int \frac{(\overline F(s)\xi(s)-\psi(s))^2}{\overline F^2(s)\overline G(s)} \,\rd F(s). \label{3.8}
\end{eqnarray}

{\bf Remark 3.1}\ {\it  Formulas (3.6) are (3.10)
are obtained in He $\&$ Huang (2003), and (3.11)
is given in Yang (1994). Under condition (1.3), it can be shown that
(3.10) and (3.11)
are equal.  The  variance of $W_i$ is smaller than that of $M_1$ and
$M_2$ defined in (3.7) and (3.8). The variance of the latter two
equals
$$\int \frac{\xi^2(s)}{\overline G(s)}\,\rd F(s) -\mu^2$$ with the corresponding  choices of $\xi(z)=g(z,\th)$, in $M_1$ and
$M_2$.}


 We proceed to prove Theorem 3.1. For the restricted $W_i(b)'s$, the following lemma is taken from (3.11) of He \& Huang (2003).

\vskip .10in

{\bf Lemma 3.1} \  {\it
Let $F$ and $G$ be continuous. For each  $\th$ fixed, set
$\xi(x)=g(x,\th)$. Assume that $\int \xi^2(x) \,\rd F(x)<\infty$ and
$b < b_H$. Let $\xi_b(x)$ be the restriction of $\xi $ on $(0, b]$
for $b < b_H$. Then, as $n\to \infty$,
$$ \sqrt{n}\int \xi_b(x) \,\rd (F_n(x) - F(x))
=\frac{1}{\sqrt{n}}\sum_{i=1}^n  W_i(b)+ o_p(1).$$
}

The following result will be used repeatedly and for easy reference,
it is stated in Lemma 3.2. Its proof is given the appendix.

\vskip .10in

{\bf Lemma 3.2} \ {\it
For  $b<b_H$, let $\{h_n(b)\} $ be a random sequence such that
$h_n(b) \to h(b)$ in distribution as $n\to \infty$, and
$h(b)=o_p(1)$ as $b \to b_H$. As $n\to \infty$, if $V_n=O_p(1)$ and
the random sequence $\{S_n\}$ can be written as $S_n= o_p(1)+ V_n
h_n(b)$ for any $b<b_H$, then $ S_n=o_p(1)$.
}

{\bf Remark 3.2} \ {\it
In what follows, $h_n(b)$ is  used as a generic notation to denote
any random sequence $\{h_n(b)\}$ that satisfies the assumptions of
Lemma 3.2. This simplifies many of the statements later. For
example, under condition (1.3)
and $b < b_H$, put
\begin{eqnarray}
h_n(b) = \int_b^{b_H} \frac{\xi^2(s)}{\overline G^2(s)} \,\rd
H_n^1(s). \label{3.9}
\end{eqnarray}
Then, by the SLLN and (2.8)
\begin{eqnarray}
\lim_{n\to\infty}h_n(b)
 &=&\lim_{n\to\infty}\int_b^{b_H}\frac{\xi^2(s)}{\overline G^2(s)} \,\rd H_n^1(s)
   =\int_b^{b_H} \frac{\xi^2(s)}{\overline G^2(s)} \,\rd H^1(s) \nonumber \\
 &=&\int_b^{b_H} \frac{\xi^2(s)}{\overline G^2(s)} \overline G(s) \,\rd F(s)
   = h(b) \to 0, \ \mbox{as $b\to b_H.$} \nonumber
\end{eqnarray}
}

\begin{thm}
Let $W_i$ be given by (3.6).
 Suppose
$F$ and $G$ are continuous, and for each fixed $\th$
 set  $\xi(x)=g(x,\th)$. Then under condition (1.3)
 as $n\to \infty$,
\begin{equation}
\sqrt{n}\int \xi(x) \,\rd (F_n(x)
-F(x))=\frac{1}{\sqrt{n}}\sum_{i=1}^n W_i+ o_p(1).\notag
\end{equation}
\end{thm}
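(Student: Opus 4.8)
The plan is to pass from the truncated representation of Lemma~3.1 to the full statement by a truncation argument, using Lemma~3.2 to absorb the tail. Set
\[
D_n := \sqrt{n}\int \xi(x)\,\rd(F_n(x)-F(x)) - \frac{1}{\sqrt{n}}\sum_{i=1}^n W_i ,
\]
so the goal is $D_n=o_p(1)$. For an arbitrary $b<b_H$ I would split
\[
D_n = \Big[\sqrt{n}\int \xi_b\,\rd(F_n-F) - \tfrac{1}{\sqrt{n}}\sum_{i=1}^n W_i(b)\Big]
 + \Big[\sqrt{n}\int(\xi-\xi_b)\,\rd(F_n-F) - \tfrac{1}{\sqrt{n}}\sum_{i=1}^n (W_i-W_i(b))\Big].
\]
By Lemma~3.1 the first bracket is $o_p(1)$ for each fixed $b$, so it suffices to show the second bracket, call it $R_n(b)$, has the form $o_p(1)+V_n h_n(b)$ with $V_n=O_p(1)$ and $h_n(b)$ a generic sequence of the type in Remark~3.2; Lemma~3.2 then forces $D_n=o_p(1)$.

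First I would simplify $R_n(b)$. Since $\xi-\xi_b=\xi\,I[\,\cdot\,>b]$ and $\int(\xi-\xi_b)\,\rd F=\mu-\mu_b$, the two occurrences of $\sqrt{n}(\mu-\mu_b)$ (one from $-\sqrt{n}\int(\xi-\xi_b)\,\rd F$, one from the $-(\mu-\mu_b)$ in $W_i-W_i(b)$) cancel. Next, writing $\int_{x>b}\xi\,\rd F_n=\int_{x>b}\frac{\xi}{\overline G_n(x-)}\,\rd H_n^1$ and using continuity of $G$, the leading piece $\sqrt{n}\int_{x>b}\frac{\xi}{\overline G(x-)}\,\rd H_n^1$ is \emph{identically} equal to $\frac{1}{\sqrt{n}}\sum_i\frac{\xi(Z_i)I[Z_i>b]\delta_i}{\overline G(Z_i)}$, i.e. the first influence sub-term of $W_i-W_i(b)$, so these cancel exactly as well. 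Using $\frac{1}{\overline G_n}=\frac{1}{\overline G}-\frac{\overline G_n-\overline G}{\overline G\,\overline G_n}$, what survives is
\begin{align*}
R_n(b) &= -\sqrt{n}\int_{x>b}\frac{\xi(x)\,(\overline G_n(x-)-\overline G(x-))}{\overline G(x-)\,\overline G_n(x-)}\,\rd H_n^1(x) \\
&\quad - \frac{1}{\sqrt{n}}\sum_{i=1}^n\Big[\frac{\overline{\delta}_i}{\overline H(Z_i)}\,(\psi-\psi_b)(Z_i) - \int (\psi-\psi_b)(s)\,\frac{I[Z_i\ge s]}{\overline H^2(s)}\,\rd H^0(s)\Big].
\end{align*}
Thus the entire task reduces to showing that the tail Kaplan--Meier error term for the censoring distribution $G$ is linearized, up to sign, by the two $\psi$-terms, with a remainder fit for Lemma~3.2.

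The main obstacle is exactly this step: linearizing $\overline G_n(x-)-\overline G(x-)$ on $(b,b_H]$ via the i.i.d. representation of the censoring Kaplan--Meier estimator (built from $H_n^0$), recognizing the leading linear part as the two $\psi$-influence terms, and bounding the remainder in product form. I would control each resulting piece by Cauchy--Schwarz so that one factor is an empirical-process quantity that is $O_p(1)$ (a standardized partial sum, or a $\sup$ of $|\overline G_n/\overline G-1|$-type deviation kept away from $b_H$) and the other factor is a deterministic-limit tail integral of the kind in Remark~3.2 --- e.g.\ $\int_b^{b_H}\frac{\xi^2}{\overline G^2}\,\rd H_n^1$ or $\int_b^{b_H}\frac{\psi^2}{\overline F\,\overline G^2}\,\rd G_n$ --- each of which converges to a limit $h(b)$ that vanishes as $b\to b_H$ by condition (1.3) together with the variance formulas (3.4)/(3.10). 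The genuine delicacy is that $\overline G(x-)$ and $\overline H(x)$ degenerate as $x\to b_H$, so every bound must be organized so that the potentially divergent factors are swept into the vanishing tail integrals rather than into the $O_p(1)$ factors. Collecting the finitely many pieces gives $R_n(b)=o_p(1)+V_n h_n(b)$, and Lemma~3.2 yields $R_n(b)=o_p(1)$, hence $D_n=o_p(1)$, which is the assertion of Theorem~3.1.
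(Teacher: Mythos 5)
Your overall framework (truncate at $b<b_H$, dispose of the truncated part by Lemma~3.1, and absorb the tail via Lemma~3.2) is the paper's framework, and your algebraic reduction of the tail remainder $R_n(b)$ is correct as far as it goes. But the proposal has a genuine gap exactly where you flag ``the main obstacle'': you never actually linearize $\overline G_n(x-)-\overline G(x)$ on $(b,b_H]$ or control the resulting remainder, and the bounds you sketch would not close. To make your first surviving term $o_p(1)+V_n h_n(b)$ by Cauchy--Schwarz you would need something like $\sup_{x}\sqrt{n}\,|\overline G_n(x-)-\overline G(x)|/\overline G_n(x-)=O_p(1)$ uniformly up to $\max_i Z_i$, paired with a tail integral that vanishes as $b\to b_H$; the only uniform control available near $b_H$ (Zhou 1992, quoted as (6.2) in the Appendix) is $O_p(1)$ \emph{without} the $\sqrt{n}$ factor, and the weighted tail integrals you list do not supply the missing $n^{-1/2}$. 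In effect your route requires reproving an i.i.d.\ representation of the censoring Kaplan--Meier estimator on the tail interval, which is as hard as the theorem itself.

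The idea you are missing is that no cancellation between the two tail pieces is needed: each is separately negligible. The paper writes the remainder as $J_2(b)-J_3(b)$ with $J_2(b)=\sqrt{n}\int \overline\xi_b\,\rd(F_n-F)$ and $J_3(b)=n^{-1/2}\sum_i(W_i-W_i(b))$, and shows each one is an $h_n(b)$ in the sense of Lemma~3.2. For $J_2(b)$ it invokes the existing CLT for Kaplan--Meier integrals (Corollary~1 of Yang 1994) applied to the tail function $\overline\xi_b=\xi I[\cdot>b]$, giving weak convergence to $N(0,\overline\sigma_b^2)$ with $\overline\sigma_b^2\le\int\overline\xi_b^2/\overline G\,\rd F\to0$ as $b\to b_H$ under (1.3); for $J_3(b)$ it is a sum of i.i.d.\ mean-zero variables with the same small variance $\overline\sigma_b^2$. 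Both limits are $o_p(1)$ as $b\to b_H$, so Lemma~3.2 applies term by term. Reorganizing your argument this way removes the need for any tail linearization of $\overline G_n$ and closes the gap.
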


\begin{proof} For $b<b_H$, put $ \overline \xi_b=\xi(x)
-\xi_b(x)=\xi(x)I[x> b]$. Decompose the following difference as
\begin{equation}
\sqrt{n} \int \xi(x) \,\rd
(F_n(x)-F(x))-\frac{1}{\sqrt{n}}\sum_{i=1}^n W_i \equiv
J_1(b)+J_2(b) -J_3(b), \label{3.10}
\end{equation}
where
\begin{eqnarray}
&& J_1(b)=\sqrt{n} \int \xi_b(x)  \,\rd  (F_n(x)-F(x))-
\frac{1}{\sqrt{n}}\sum_{i=1}^n W_i(b), \notag \\
&& J_2(b)= \sqrt{n}\int \overline \xi_b(x)  \,\rd (F_n(x)-F(x)),\notag \\
&&J_3(b)=\frac{1}{\sqrt{n}}\sum_{i=1}^n (W_i-W_i(b)). \notag
\end{eqnarray}
Let $\overline \psi_b(x)=\int_{s\geq x} \overline \xi_b(s) \,\rd
F(s)$. It follows that
\begin{equation}
 \overline \psi_b^2(x) \leq \overline F(x)^2\int_{s\geq x}
\overline \xi_b^2(s) \,\rd F(s).\label{3.11}
 \end{equation}

By Lemma 3.1, $J_1(b)=o_p(1)$. We shall show that $J_2(b)=h_n(b)$
and $J_3(b)=h_n(b)$ as $n \to \infty$. Applying Corollary 1 of Yang
(1994), $J_2(b)$ converges weakly to $N(0,\,\overline \sig_b^2)$,
where $\overline \sig_b^2$ is similarly defined as in  (3.12) with
$\xi$ and $\psi$ replacing by their restrictions $\overline \xi_b$
and $\overline \psi_b$ respectively.

Now
\begin{eqnarray}
\overline \sig_b^2 \leq  \int \frac{\overline \xi_b^2(x)}{\overline
G(x)}\,\rd F(x) \to 0, \ \text{as} \  b \to b_H. \label{3.12}
\end{eqnarray}
Therefore  $J_2(b)$ converges to $h(b)=Z_0\overline \sig_b$ in
distribution, and $h(b)=o_p(1)$ as $b\to b_H$, where $Z_0$ is a
$N(0,\,1)$ random variable.  It follows that  $J_2(b)=h_n(b)$.

To prove $J_3(b)=h_n(b)$, note that the difference $ W_i-W_i(b) $,
as given in (3.6) and (3.3)
equals to
\begin{equation}
\frac{\overline\xi_b(Z_i)\delta_i}{\overline G(Z_i)}
 -\int\overline\xi_b(x)\,\rd F(x)
 +\frac{{\overline\delta}_i}{{\overline H}(Z_i)}\overline \psi_b(Z_i)
 -\int\overline\psi_b(s)\frac{I[Z_i\geq s]}{\overline H^2(s)}dH^0(s).
\label{3.13}
\end{equation}
Therefore $ W_i-W_i(b) $ are i.i.d. random variables with mean zero
and variance $\overline \sig_b^2$. Hence,  $J_3(b)=h_n(b)$ follows
for the same reason as that of $J_2(b)$.

We conclude that the following holds for (3.13),
$$ \sqrt{n}\int\xi(x) \,\rd (F_n(x)-F(x))-\frac{1}{\sqrt{n}}\sum_{i=1}^n W_i=o_p(1)+h_n(b)+h_n(b).$$
Theorem 3.1 follows from Lemma 3.2.
\end{proof}


{\bf Remark 3.3}\ {\it
If $b_F < b_G$, then (1.3)
is equivalent to $\xi$ having finite second moment. If $\xi$ is
bounded and away from zero, then (1.3)
is equivalent to $\int_0^{\infty} \,\rd F(s)/{\overline G(s)}.$
}

\section{Empirical Likelihood Ratios and Confidence Intervals for $\th$}
\def\theequation{4. \arabic{equation}}
\setcounter{equation}{0}

To develop an EL inference procedure, we consider a specific
$g(x,\th)$. For each fixed $\th$, as before, set $\xi(x)=g(x, \th)$.
We shall utilize the i.i.d. random variables
\begin{eqnarray}
 W_i=\frac{\xi(Z_i)\delta_i}{\overline G(Z_i)}
     +\frac{{\overline \delta}_i}{{\overline H}(Z_i)}\psi(Z_i)
     -\int\psi(s)\frac{I[Z_i\geq s]}{\overline H^2(s)}dH^0(s). \label{4.1}
\end{eqnarray}
to obtain an estimating equation for the EL ratio. Recall that
$\mu=\E\,W_i=\int_0^\infty\xi(x)\,\rd F(x)$  and $\text{Var}(W_i)$
are given by (3.10).
Note that setting $\xi(x)=g(x,\th)$ above has nothing to do with
defining $\th$ from the equation $\E\,g(X,\th)=0$ as given in (1.5).
If, however, the true parameter $\th_0$ is the  solution of the
equation
\begin{equation}
 \E\,g(Y,\th)=\int g(x,\th)\,\rd F(x)=0, \label{4.2}
\end{equation}
then $\xi(x)=g(x,\th_0)$ is such that
$$\mu = \E\,W_i=\int_0^\infty\xi(x)\,\rd F(x)=0.$$

Regarding $W_i$ for $i=1,\cdots,n$ as a ``complete'' random sample,
one could formulate an EL likelihood ratio $R(\th_0)$ with
multinomial probability $p_i$ assigned to $W_i$ and the constraint
$\sum_{i=1}^n W_i p_i = 0$.
 However, $W_i's$ are not
observable because of the unknown distributions $G$, $F$ $H$ and
$H^0$. We shall replace them by the KM estimates, $F_n$, $G_n$ given
by (2.3)
and an estimate of $\psi$,
\begin{equation}
\psi_n(x)=\int_{s\geq x}\xi(s)\,\rd F_n(s). \label{4.3}
\end{equation}
Replacing $\overline G, \overline H, H^0$ in (4.1) by their
corresponding empirical distributions in (2.2)  gives an
approximation of $W_i$ in (4.1)
by
\begin{equation}
W_{ni}
 =\frac{\xi(Z_i)\delta_i}{\overline G_n(Z_i-)}
 +\frac{{\overline \delta}_i}{{\overline H_n}(Z_i-)}\psi_n(Z_i)
 -\int\psi_n(s)\frac{ I[Z_i\geq s]}{\overline H_n^2(s-)}\,\rd H_n^0(s).
\label{4.4}
\end{equation}
The price to pay for the estimation is that ${W_{ni}}'s $ are not
stochastically independent which complicates the ensuing analysis.


The following theorem indicates the possibility of using $W_{ni}$ to
construct  empirical likelihood ratio and to obtain  asymptotically
a standard $\chi^2$ distribution.

\begin{thm}
Let $W_{ni}$ be given by (4.4)
and $\E\, \xi(Y) =0$. Then under condition (1.3),
as $n\to \infty$, we have
\begin{equation}
\frac{1}{\sqrt{n}}\sum_{i=1}^n W_{ni} =
\frac{1}{\sqrt{n}}\sum_{i=1}^n W_{i} + o_p(1). \end{equation}
\end{thm}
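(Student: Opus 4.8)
The plan is to reduce Theorem 4.2 to Theorem 3.1 through an exact algebraic identity rather than through a term-by-term stochastic expansion of the Kaplan--Meier plug-ins. The key observation I would exploit is that the sample average of the $W_{ni}$ collapses exactly to $\mu_n$. To establish this I would treat the three terms of (4.4) separately. For the first term, since $\rd H_n^1$ places mass $1/n$ at each uncensored observation, the identity (2.6), $\rd H_n^1(x)=\overline G_n(x-)\,\rd F_n(x)$, gives $\frac{1}{n}\sumin \frac{\xi(Z_i)\delta_i}{\overline G_n(Z_i-)}=\int \frac{\xi(x)}{\overline G_n(x-)}\,\rd H_n^1(x)=\int \xi(x)\,\rd F_n(x)=\mu_n$, exactly as in (3.1).

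Next I would show that the averages of the last two terms of (4.4) cancel. Because $\overline\delta_i$ selects the censored observations, $\frac{1}{n}\sumin \frac{\overline\delta_i}{\overline H_n(Z_i-)}\psi_n(Z_i)=\int \frac{\psi_n(s)}{\overline H_n(s-)}\,\rd H_n^0(s)$. For the third term, interchanging the (finite) sum with the integral and using the elementary identity $\frac{1}{n}\sumin I[Z_i\geq s]=\overline H_n(s-)$ yields $\frac{1}{n}\sumin \int \psi_n(s)\frac{I[Z_i\geq s]}{\overline H_n^2(s-)}\,\rd H_n^0(s)=\int \psi_n(s)\frac{\overline H_n(s-)}{\overline H_n^2(s-)}\,\rd H_n^0(s)=\int \frac{\psi_n(s)}{\overline H_n(s-)}\,\rd H_n^0(s)$, which is precisely the second-term average. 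Since these enter (4.4) with opposite signs they cancel, and therefore $\frac{1}{n}\sumin W_{ni}=\mu_n$ identically.

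With this identity the remainder is immediate. The hypothesis $\E\,\xi(Y)=0$ means $\mu=\int\xi\,\rd F=0$, so the $-\mu$ term of (3.6) vanishes and the $W_i$ of (4.1) coincides with the $W_i$ of (3.6); moreover $\frac{1}{\sqrt n}\sumin W_{ni}=\sqrt n\,\mu_n=\sqrt n(\mu_n-\mu)$. Applying Theorem 3.1 (with $\xi(x)=g(x,\th_0)$, the true parameter satisfying (4.2)) gives $\sqrt n(\mu_n-\mu)=\frac{1}{\sqrt n}\sumin W_i+o_p(1)$, which is exactly the assertion.

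The genuine content of the argument is thus already carried by Theorem 3.1; the only work specific to Theorem 4.2 is spotting the cancellation. Accordingly, I do not expect an analytic obstacle but rather a bookkeeping one: one must verify that the plug-in denominators $\overline G_n(Z_i-)$ and $\overline H_n(s-)$ are strictly positive on the supports of the relevant empirical measures, so that all the manipulations are legitimate, and that no boundary mass is lost in passing between $\rd F_n$ and $\rd H_n^1(x)/\overline G_n(x-)$, i.e.\ that the identity $\frac{1}{n}\sumin W_{ni}=\mu_n$ holds exactly and not merely up to an $o_p(1)$ boundary term at the largest observation. Should the exact identity fail under some edge convention there, the fallback is the standard but laborious route: truncate at $b<b_H$, expand each term using the uniform rates of $\sup_{x\leq b}|\overline G_n(x)-\overline G(x)|$, $\sup_{x\leq b}|\overline H_n(x)-\overline H(x)|$ and $\sup_{x\leq b}|\psi_n(x)-\psi(x)|$, and control the tail with the generic $h_n(b)$ device of Lemma 3.2.
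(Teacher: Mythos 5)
Your proposal is correct and follows essentially the same route as the paper: the exact identity $\frac{1}{n}\sum_{i=1}^n W_{ni}=\int\xi\,\rd F_n$ via (2.6) and the cancellation of the last two terms using $\frac{1}{n}\sum_{i=1}^n I[Z_i\geq s]=\overline H_n(s-)$, followed by an appeal to Theorem 3.1 with $\mu=0$. The paper's proof is exactly this computation, so no further comparison is needed.
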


\begin{proof} By (2.6),
we have
\begin{eqnarray*}
& & \frac{1}{n}\sum_{i=1}^{n}W_{ni} \\
 &=&\frac{1}{n}\sum_{i=1}^{n} \Big(\frac{\xi(Z_i)\delta_i}{\overline G_n(Z_i-)}
    + \frac{{\overline \delta}_i}{{\overline H_n}(Z_i-)}\psi_n(Z_i)
    -\int \psi_n(s)\frac{I[Z_i\geq s]}{\overline H_n^2(s-)}dH_n^0(s)\Big)\notag\\
 &=&\int\frac{\xi(s)}{\overline G_n(s-)} \,\rd H^1_n(s)
    + \int\frac{\psi_n(s) }{{\overline H_n}(s-)} \,\rd H_n^0(s)
    - \int\psi_n(s)\frac{\overline H_n(s-)}{\overline H_n^2(s-)}dH_n^0(s) \notag\\
&=& \int \frac{\xi(s)} {\overline G_n(s-)} \,\rd H^1_n(s) =  \int
\xi(s) \,\rd F_n(s).
\end{eqnarray*}
Applying   $\int_0^\infty \xi(s) \, \rd F(s)=\E \xi(Y)  =0$  and
Theorem 3.1, we arrive at
\begin{equation}
\frac{1}{\sqrt{n}}\sum_{i=1}^{n}W_{ni}= \sqrt{n} \int \xi(s) \,\rd
(F_n(s) -F(s)) =\frac{1}{\sqrt{n}}\sum_{i=1}^n W_{i} + o_p(1)
\label{h4.6}.
\end{equation}

\end{proof}

Following Owen (2001), define the EL ratio of $\th$ by a multinomial
likelihood subject to constraints as
\begin{equation}
R(\th)=\sup_{\{p_i\}}\left\{\ \prod_{i=1}^{n}np_{i}\Big|\
\sum_{i=1}^{n}p_{i}=1,\,\sum_{i=1}^{n}p_{i}W_{ni}=0,\,p_{i}\geq
0,\,i=1,2,\cdots,n\right\}. \label{4.5}
\end{equation}
To determine $R(\th)$, we solve, as usual, for the Lagrange
multipliers $\mu$ and $\lambda$  in
$$A=\sum_{i=1}^n \log(np_i) - n\lambda(\sum_{i=1}^n p_iW_{ni})-\mu(1-\sum_{i=1}^n p_i).$$
Then $\mu=-n$ and $p_{i}=\dfrac{1}{n}(1+\lambda W_{ni})^{-1},\
i=1,2,\cdots,n,$ where  $\lambda$ is the solution of
$$\dfrac{1}{n}\sum\limits_{i=1}^{n}\dfrac{W_{ni}}{1+\lambda W_{ni}}=0.$$

The uniqueness of $\lambda$ will be addressed in the proof of
Theorem 4.2. The EL ratio of $\th$ can be written as
\begin{equation}
R(\th)=\prod_{i=1}^{n}(np_{i})=\prod_{i=1}^{n}(1+\lambda
W_{ni})^{-1}. \label{4.6}
\end{equation}

\begin{thm}  Suppose that $\th_0$ is the
unique solution of (4.2)
and finite second moment (1.3) holds. Set $\xi(x)=g(x,\th_0)$. Then
$l(\th_0)=-2\log R(\th_0)$ converges in distribution  to a
$\chi_1^2$ random variable with one degree of freedom,  as $n\to
\infty$.
\end{thm}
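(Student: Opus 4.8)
The plan is to follow the classical empirical likelihood argument (Owen, 1988, 2001), with the crucial observation that because $W_i$ is the influence function of $\mu_n$, its variance $\sigma^2=\text{Var}(W_i)$ given in (3.10) is exactly the asymptotic variance appearing in the central limit theorem for $\frac{1}{\sqrt n}\sum_{i=1}^n W_{ni}$. Consequently the variance in the numerator produced by the CLT will cancel against the variance in the denominator produced by a law of large numbers, and $-2\log R(\th_0)$ will converge to a standard $\chi_1^2$ with no residual scale factor. Write $\bar W_n=\frac1n\sum_{i=1}^n W_{ni}$ and $S_n=\frac1n\sum_{i=1}^n W_{ni}^2$.

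First I would assemble three auxiliary facts about the (non-independent) $W_{ni}$. (i) By Theorem 4.1, $\frac{1}{\sqrt n}\sum_{i=1}^n W_{ni}=\frac{1}{\sqrt n}\sum_{i=1}^n W_i+o_p(1)$; since the $W_i$ are i.i.d. with mean zero (using $\E\,\xi(Y)=0$) and variance $\sigma^2<\infty$, the classical CLT gives $\sqrt n\,\bar W_n$ converging in distribution to $N(0,\sigma^2)$. (ii) $S_n\to\sigma^2$ in probability. (iii) $\max_{1\le i\le n}|W_{ni}|=o_p(\sqrt n)$.

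Then I would carry out the Lagrange-multiplier analysis. From the existence and uniqueness of $\lambda$ solving $\frac1n\sum_{i=1}^n W_{ni}/(1+\lambda W_{ni})=0$ (standard once (iii) and $S_n\to\sigma^2>0$ are in hand), expanding $\frac{W_{ni}}{1+\lambda W_{ni}}=W_{ni}-\lambda W_{ni}^2+\frac{\lambda^2 W_{ni}^3}{1+\lambda W_{ni}}$ and summing yields $\lambda=S_n^{-1}\bar W_n+o_p(n^{-1/2})=O_p(n^{-1/2})$, the remainder being controlled via (iii). Substituting into the Taylor expansion $2\log(1+\lambda W_{ni})=2\lambda W_{ni}-\lambda^2 W_{ni}^2+r_{ni}$ and using $\sum_{i=1}^n r_{ni}=o_p(1)$ together with the constraint gives
$$l(\th_0)=-2\log R(\th_0)=n\lambda^2 S_n+o_p(1)=\frac{(\sqrt n\,\bar W_n)^2}{S_n}+o_p(1).$$
By (i), (ii) and Slutsky's theorem, the right-hand side converges in distribution to $N(0,\sigma^2)^2/\sigma^2=\chi_1^2$, which is the assertion.

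The main obstacle is step (ii), the consistency $S_n\to\sigma^2$ (and to a lesser extent (iii)), because the estimated quantities $\overline G_n$, $\overline H_n$, $H_n^0$ and $\psi_n$ embedded in $W_{ni}$ make the summands dependent. I would handle this by the same truncation device used for Theorem 3.1: write $S_n=\frac1n\sum_{i=1}^n W_i^2+\big(S_n-\frac1n\sum_{i=1}^n W_i^2\big)$, note $\frac1n\sum_{i=1}^n W_i^2\to\sigma^2$ by the SLLN, and bound the difference by splitting at a level $b<b_H$, using uniform consistency of the Kaplan--Meier estimators on $[0,b]$ together with the generic remainder sequences $h_n(b)$ of Lemma 3.2 and Remark 3.2 to absorb the tail near $b_H$.
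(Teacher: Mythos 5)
Your proposal is correct and follows essentially the same route as the paper: the paper's Lemmas 4.2--4.3 supply exactly your facts (i)--(iii) (with (ii) and (iii) deduced from $\frac1n\sum_{i=1}^n(W_{ni}-W_i)^2=o_p(1)$, proved by the same truncation-at-$b$ / $h_n(b)$ device you describe), after which the Lagrange-multiplier expansion is Owen's. The only minor difference is the convex-hull step: the paper verifies that the $W_{ni}$ take both signs via an auxiliary almost-sure lemma for $V_{ni}=W_{ni}\overline H_n(Z_i-)\overline H(Z_i)$, whereas you fold it into the ``standard'' consequence of (i)--(iii), which also works.
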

Applying Theorem 4.2,  confidence intervals for $\th$ can be
constructed as
\begin{equation}
I_{1}=\{\th: l(\th)\leq c_{1-\alpha}\}, \label{4.7}
\end{equation}
where $c_{1-\alpha}$ is the $(1-\alpha)$th quantile of the
$\chi_1^{2}$ distribution.  $I_{1}$ has asymptotic coverage
probability of $1-\alpha$, as $n \to \infty.$

\vskip .12in

To prove Theorem 4.2, we shall make use of the following Taylor's
expansion of $-2\log R(\th_0)$,
\begin{eqnarray*}
-2\log R(\th_0)
 &=& 2\sum_{i=1}^n \ln(1+\la_n W_{ni})\\
 &=&\sum_{i=1}^n 2\Big(\la_n W_{ni}-\frac{1}{2}\la_n^2 W_{ni}^2+\eta_i\Big) \qquad  (\ \text{here} \ |\eta_i|\leq |X_i|^3 ) \\
 &=&2\la_n n \overline W_n -\la_n^2 n S_n^2 + 2 \sum_{i=1}^n \eta_i\\
 &=&2n \frac{\overline W_n^2}{\sig^2}-\frac{1}{\sig^4}n\overline W_n^2(\sig^2+o_p(1)) + o_p(1)\\
 &=&\frac{n \overline W_n^2}{\sig^2}+o_p(1) \to \chi_1^2, \qquad \text{in distribution},
\end{eqnarray*}
where
\begin{eqnarray}
\overline W_n = \sum_{i=1}^n W_{ni}, \quad \mbox{and \; $S_n^2 =
\sum_{i=1}^n W_{ni}^2$.}
\end{eqnarray}
Asymptotic analysis of $\overline W_n = \sum_{i=1}^n W_{ni}$ and
$S_n^2 = \sum_{i=1}^n W_{ni}^2$ are needed for establishing the last
two equalities in the expansion. It will be proven in Lemma 4.3 and
Theorem 4.2 that both of these averages are related to the
asymptotic variance (1.4) or (3.10). The following lemmas are needed
for proving Theorem 4.2.

{\bf Lemma 4.1} \ {\it
Let $f_n(x)$  and $f(x)$ be monotone functions defined on the range
of $Z$. If $f(x)$ is continuous and for $x\in [0, b_H]$, $f_n(x) \to
f(x)$, as $n\to \infty$, then $f_n(x)$ converges to $f(x)$ uniformly
on $[0, b_H]$.}
The proof is omitted.

{\bf Lemma 4.2} \ {\it
Let $V_{ni}=W_{ni}{\overline H}_n(Z_i-) {\overline H}(Z_i)$ and
$V_{i}=W_{i}{\overline H}^2(Z_i)$. Under the conditions of Theorem
4.2 and $\th=\th_0$, as $n\to \infty$,

(1) \ $\dfrac{1}{n}\sum\limits_{i=1}^{n}(W_{ni} -W_i)^2 = o_p(1)$,

(2) \ $\dfrac{1}{n}\sum\limits_{i=1}^{n}(V_{ni} -V_i)^2 \to 0, \ \text{a.s.}$.\\
}

The proof is relegated to the Appendix. Lemma 4.2 is needed for
showing that with probability 1, for large $n$ the set $\{W_{ni}\}$
contains a positive and a negative value. To facilitate the proof,
$V_{ni}'s$, a modification of $W_{ni}'s$, are introduced to deal
with  the problem at the boundary $b_H.$ It follows that for large
$n$ there exists a unique $\lambda_n$ for $R(\th)$ in (4.8).

{\bf Lemma 4.3}\ {\it
Let $W_{ni}$ and $W_i$ be given by (4.4) and (4.1),
respectively. Under the conditions of Theorem 4.2 and $\th=\th_0$,
as $n\to \infty$,
$$\max\limits_{1\leq i\leq n}|W_{ni}|=o_{p}(\sqrt{n}), \ \ \ \frac{1}{n}\sum_{i=1}^{n}W^2_{ni}=\sig^2 +o_p(1),$$
and
$$\frac{1}{\sqrt{n}}\sum_{i=1}^{n}W_{ni} \to N(0,\,\sigma^2),\ \ \text{in dist.},$$
where $\sig^2=\Var(W_i)$ is given by (3.10).}

The proof is relegated to the Appendix.  We now prove Theorem 4.2.

\renewcommand{\proofname}{Proof of Theorem 4.2}

\begin{proof} The Lagrange multiplier $\la$ in (4.6) appears in the equation
\begin{eqnarray}
h(\la)= \frac{1}{n} \sum_{i=1}^n \frac{W_{ni}}{1+\la W_{ni}}=0.
\label{4.8}
\end{eqnarray}
We shall show that for large $n$, $h(\la) = 0$ has a unique solution
$\la_n$ such that  $\la_n W_{ni} > -1$ for all $i$. Put
$$U_{i}=\begin{cases} -W_{ni}^{-1}, & W_{ni}\neq 0, \\
 \infty, & W_{ni}=0. \end{cases}$$ Let $U_{(1)}\leq U_{(2)}\leq \cdots \leq
U_{(n)}$ be the ordered statistics of $U_1,U_2,\cdots, U_n$. Then
$$h(\la)=\frac{1}{n}\sum_{i=1}^n \frac{W_{ni}}{1+\la W_{ni}}= \frac{1}{n}\sum_{i=1}^n\frac{1}{\la-U_{(i)}}$$
is monotone  and differentiable in $\la$ on each nonempty interval
$(U_{(i)}, U_{(i+1)})$. We claim that for large $n$, there exists an
$i $ such that $U_{(i)}<0< U_{(i+1)}$. To see this, we note that for
every $\vv>0$, by Lemma 4.2(2)
$$\frac{1}{n}\sum_{i=1}^n I[\, |V_{ni}-V_i|\geq \vv \, ] \leq \frac{1}{n}\sum_{i=1}^n (V_{ni}-V_i)^2/ \vv^2=o(1),\ \text{a.s.}.$$
Using the fact that $I[V_i\geq \vv ]\leq I[V_{ni}\geq
\vv/2]+I[{|V_{ni}-V_i| \geq \vv/2}]$, we get
$$\frac{1}{n}\sum_{i=1}^n I[ V_{ni}\geq \vv/2]\geq\frac{1}{n}\sum_{i=1}^n I[ V_i\geq \vv]+o(1), \ \text{a.s.}.$$
Using the fact that $P(V_1>0)=P(W_1>0)>0$, it is seen that for some
$\vv>0$,
\begin{eqnarray*}
\liminf_{n\to\infty} \frac{1}{n}\sum_{i=1}^n I[ V_{ni}\geq \vv/2]
 \geq P(V_1\geq \vv)>0, \ \text{a.s.}.
\end{eqnarray*}
Similarly, we have
$$\liminf_{n\to\infty} \frac{1}{n}\sum_{i=1}^n I[ V_{ni}\leq - \vv/2 ]
\geq P(V_1\leq -\vv)>0, \ \text{a.s.}.$$ Since $W_{ni}$ and $V_{ni}$
have the same sign, hence the claim is true. It follows that there
is a unique $\la_n \in (U_{(i)}, U_{(i+1)})=(-1/\max\{W_{ni}\},
-1/\min\{W_{ni}\})$ such that $h(\la_n)=0$ and $\la_n
\max\{W_{ni}\}>-1$ and $\la_n \min\{W_{ni}\}>-1 $.

The rest of the proof is similar to that of Owen (2001). In fact,
setting
$$X_i=\la_n W_{ni}, \ \ \overline W_n= \frac{1}{n}\sum_{i=1}^n W_{ni}, \ \ S_n^2 =  \frac{1}{n} \sum_{i=1}^n W_{ni}^2,$$
we have $S_n^2=\sig^2+o_p(1)$,
$$\frac{1}{n}\sum_{i=1}^n\frac{X_i^2}{1+X_i}=\frac{1}{n}\sum_{i=1}^n\frac{(X_i+1-1)X_i}{1+X_i}=\overline X_n-\la_nh(\la_n)=\la_n \overline W_n$$
and
\begin{equation}
\la_n^2S_n^2
 =\frac{1}{n}\sum_{i=1}^n X_i^2 \leq \frac{1}{n}\sum_{i=1}^n\frac{X_i^2}{1+X_i}\big(1+\max_{1\leq j \leq n} |X_j|\big )
 =\la_n \overline W_n + \la_n^2 o_p(1).\label{4.9}
\end{equation}
It follows that
$$\la_n =\frac{ \overline W_n}{\sig^2+o_p(1)}=O_p(n^{-1/2})$$
and
\begin{equation} \overline W_n= \la_n \sig^2 +o_p(n^{-1/2}). \label{4.10}
\end{equation}
Applying Lemma 4.3, we have
$$\sum_{i=1}^n |X_i|^3 \leq \la_n^3 \sum_{i=1}^n |W_{ni}|^2 \max_{1\leq j\leq n}|W_{ni}|=O_p(n^{-3/2})O_p(n)o_p(n^{1/2})=o_p(1).$$

Therefore the Taylor expansion (above (4.10)) is valid from which
the theorem follows.

\end{proof}

{\bf Remark 4.1} \ {\it
We are able to obtain the standard asymptotic $\chi^2_1$
distribution  for $-2\log R(\th_0)$ is because
 the asymptotic variance of
$$\frac{1}{\sqrt{n}}\sum_{i=1}^{n}W_{ni}=\frac{1}{\sqrt{n}}\sum_{i=1}^{n}W_{i}+
o_p(1)
$$
(which is $\sigma^2=\Var(W_i)$) equals the limit of (see Lemma 4.3)
$$\frac{1}{n}\sum_{i=1}^{n}W^2_{ni}= \frac{1}{n}\sum_{i=1}^{n}W^2_{i}+ o_p(1).$$

If  $-2\log(EL \, ratio)$ is based on the estimating function
$M_2=M_2(Z,\delta,\th)$ in (3.8) ( or in (3.7)),
then
$$
{V}_{ni}=\frac{g(Z_i, \th)\delta_i}{1-G_n(Z_i)}
$$
will be used to construct $-2\log(EL\, ratio)$. Now, the asymptotic
variance of
$$\frac{1}{\sqrt{n}}\sum_{i=1}^{n}V_{ni}=\frac{1}{\sqrt{n}}\sum_{i=1}^{n}W_{i}+
o_p(1)
$$
is $\sigma^2$ (see(3.10)),
but the limit of $ \frac{1}{n}\sum_{i=1}^{n}V^2_{ni}$ or
$$ \frac{1}{n}\sum_{i=1}^{n}\Big( V_{ni} - \frac{1}{n}\sum_{j=1}^{n} V_{nj} \Big)^2$$ is
(see Remark 3.1)
$$\sigma_1^2 = \int \frac{\xi^2(s)}{\overline G(s)}\,\rd F(s) -\mu^2, \ \ \mu =0.$$
Therefore, a scaled parameter $r=\sig_1^2/\sig^2$ must be introduced
in order to obtain the asymptotic distribution for $-2\log(EL\,
ratio)$  as in Wang $\&$ Jing (2001).
}

\section{Simulation}
\def\theequation{5. \arabic{equation}}
\setcounter{equation}{0}

 Simulations are carried out to study and compare finite sample performance of confidence intervals $I_1$
 in (4.9)
derived from Theorem 4.2 and $I_2$ from the scaled $\chi_1^2$
distribution given by Wang \& Jing (2001) and Qin \& Zhao (2007).

To calculate $I_1$, $W_{ni}$ in (4.4)
is used, where
$$
W_{ni}
 =\frac{\xi(Z_i)\delta_i}{\overline G_n(Z_i-)}
 +\frac{{\overline \delta}_i}{{\overline H_n}(Z_i-)}\psi_n(Z_i)
 - \frac{1}{n}\sum_{j=1}^n\psi_n(Z_j)\frac{ I[Z_i\geq Z_j]{\overline \delta}_j}{\overline
 H_n^2(Z_j-)},
$$
and $\psi_n(x)$ is given by (4.3).

 Confidence
intervals $I_2$  are calculated as follows. Let $F_n,$ and $G_n$ be
the Kaplan-Meier estimators defined by (2.3).
 Suppose $\hat{\theta}$ is the unique solution of
$\int g(s,\,\theta)\,\mathrm{d} F_n(s)=0$. Set
\begin{equation*}
\xi_i=g(Z_i,\,\theta),\qquad\hat{\xi}_i=g(Z_i,\,\hat{\theta}),
\end{equation*}
\begin{equation}
 V_{ni}=\frac{\xi_i\,\delta_i}{1-G_n(Z_i)},\qquad\hat{V}_{ni}=\frac{\hat{\xi}_i\,\delta_i}{1-G_n(Z_i)},
 \label{5.1}
\end{equation}
\begin{equation*}
 \sigma_1^2=\frac{1}{n}\sum_{i=1}^n (\hat{V}_{ni} -\overline{V}_n)^2,\qquad\overline{V}_n = \frac{1}{n}\sum_{i=1}^n\hat{V}_{ni},
\end{equation*}
\begin{equation*}
 \hat{r}=\frac{\sigma_1^2}{n\,\widehat{\mathrm{Var}}^*(jack)},
\end{equation*}
where  $n\,\widehat{\mathrm{Var}}^*(jack)$ is the modified jackknife
estimator of the asymptotic variance of $\hat{\xi}$ given in
Stute(1996). Then, the EL-based confidence interval for $\th$ is
\begin{equation}
I_2=\left\{\theta:\,\,2\,\hat{r}\sum_{i=1}^n \log(1+\lambda V_{ni})
\leq c_{1-\alpha} \right\}, \label{5.2}
\end{equation}
where $\lambda$ is the solution of $ \sum_{i=1}^n V_{ni}/(1+\lambda
V_{ni})=0$.

\vskip 0.10in

 Simulations were performed in two scenarios. In scenario I, the parameter of interest is  $\th_0=\E\,Y$ and in scenario II, the mean residual lifetime.
\vs {\bf Scenario I:}
 The parameter of interest is $\th_0=\E\,Y$  and  $\xi(x)=g(x,\th)=x-\th$ is used for calculating  $I_1$. Two cases were simulated:

(i) The lifetime $Y$ is uniformly distributed on $(0,1)$ and the
censoring time $C$ is uniformly distributed on (0,\, c).  We
selected $c=2.5$ and $c=1.3$ which corresponds respectively to 20\%
and 30\% censoring proportions.

(ii)  $Y$ has a Weibull(1,\,10) distribution and $C$ has an
Exp($\lambda$) distribution. Then for $\lambda=4.3$ and
$\lambda=2.7$, the corresponding censoring proportions are 20\% and
30\%. The simulated observations are  $n$  i.i.d. copies of
$Z=\min(Y,\,C), \delta=I[Y \leq C]$. Based on the simulated
observations, confidence intervals $I_1$ derived from Theorem 4.2
and $I_2$ from (5.2)
were calculated. The process was repeated for $N=2\times 10^4$ times
and the coverage proportions and the average width of the confidence
intervals were calculated using the $N$ data sets. The results are
summarized in Table 1 and Table 2.

\begin{table*}
\caption{The coverage proportions for  the true $\th_0=\E\,Y$}
\begin{tabular}{ccccccc}
\hline \multicolumn{2}{l}{20\% censoring proportion}\\ \cline{1-7}
 Nominal Value &  Sample Size   & \multicolumn{2}{c}{Uniform(0,\,1)}& & \multicolumn{2}{c}{Weibull(1,\,10)}\\
  $1-\alpha$   &     $n$        & $I_2$ & $I_1$ &  & $I_2$ & $I_1$  \\\hline
   0.90        &  20            & 0.876 & 0.881 &  & 0.871 & 0.871  \\
               &  40            & 0.895 & 0.897 &  & 0.889 & 0.890  \\
               &  60            & 0.897 & 0.897 &  & 0.893 & 0.893  \\
               &  80            & 0.897 & 0.898 &  & 0.896 & 0.896  \\\cline{1-7}
   0.95        &  20            & 0.928 & 0.935 &  & 0.922 & 0.924  \\
               &  40            & 0.946 & 0.949 &  & 0.939 & 0.941  \\
               &  60            & 0.947 & 0.948 &  & 0.945 & 0.946  \\
               &  80            & 0.947 & 0.947 &  & 0.947 & 0.948  \\\cline{1-7}
\multicolumn{2}{l}{30\% censoring proportion}\\ \cline{1-7}
 Nominal Value &  Sample Size   & \multicolumn{2}{c}{Uniform(0,\,1)}& & \multicolumn{2}{c}{Weibull(1,\,10)}\\
  $1-\alpha$   &     $n$        & $I_2$ & $I_1$ &  & $I_2$ & $I_1$  \\\hline
   0.90        &  20            & 0.841 & 0.861 &  & 0.867 & 0.869  \\
               &  40            & 0.885 & 0.890 &  & 0.890 & 0.891  \\
               &  60            & 0.888 & 0.892 &  & 0.890 & 0.891  \\
               &  80            & 0.897 & 0.900 &  & 0.893 & 0.894  \\\cline{1-7}
   0.95        &  20            & 0.897 & 0.916 &  & 0.916 & 0.924  \\
               &  40            & 0.934 & 0.941 &  & 0.939 & 0.943  \\
               &  60            & 0.941 & 0.946 &  & 0.944 & 0.946  \\
               &  80            & 0.945 & 0.947 &  & 0.945 & 0.947  \\\cline{1-7}
\end{tabular}
\end{table*}

\begin{table*}
\caption{The average width of confidence intervals for
$\th_0=\E\,Y$}
\begin{tabular}{ccccccc}
\hline \multicolumn{2}{l}{20\% censoring
proportion}&\multicolumn{2}{c}{width} &
&\multicolumn{2}{c}{width}\\\cline{1-7}
 Nominal Value & Sample Size     &\multicolumn{2}{c}{Uniform(0,\,1)} &\,& \multicolumn{2}{c}{Weibull(1,\,10)}\\
 $1-\alpha$    &      $n$        & $I_2$  & $I_1$ &    & $I_2$ & $I_1$   \\\hline
 0.90          &  20             & 0.217  & 0.218 &    & 0.092 & 0.091   \\
               &  40             & 0.157  & 0.157 &    & 0.066 & 0.065   \\
               &  60             & 0.129  & 0.129 &    & 0.054 & 0.053   \\
               &  80             & 0.112  & 0.112 &    & 0.046 & 0.046   \\\cline{1-7}
 0.95          &  20             & 0.258  & 0.259 &    & 0.110 & 0.109   \\
               &  40             & 0.187  & 0.187 &    & 0.079 & 0.078   \\
               &  60             & 0.154  & 0.154 &    & 0.064 & 0.064   \\
               &  80             & 0.133  & 0.133 &    & 0.056 & 0.055   \\\cline{1-7}
\multicolumn{2}{l}{30\% censoring proportion}&
\multicolumn{2}{c}{width} & & \multicolumn{2}{c}{width}\\\cline{1-7}
 Nominal Value & Sample Size     &\multicolumn{2}{c}{Uniform(0,\,1)} &\,& \multicolumn{2}{c}{Weibull(1,\,10)}\\
 $1-\alpha$    &      $n$        & $I_2$  & $I_1$ &    & $I_2$ & $I_1$   \\\hline
 0.90          &  20             & 0.220  & 0.227 &    & 0.097 & 0.096   \\
               &  40             & 0.162  & 0.164 &    & 0.069 & 0.069   \\
               &  60             & 0.134  & 0.134 &    & 0.057 & 0.057   \\
               &  80             & 0.116  & 0.116 &    & 0.049 & 0.049   \\\cline{1-7}
 0.95          &  20             & 0.260  & 0.270 &    & 0.116 & 0.116   \\
               &  40             & 0.192  & 0.196 &    & 0.083 & 0.083   \\
               &  60             & 0.159  & 0.160 &    & 0.068 & 0.068   \\
               &  80             & 0.138  & 0.139 &    & 0.059 & 0.059   \\\cline{1-7}
\end{tabular}
\end{table*}

\vskip 0.10in

The following are noted.

(1) As the sample size $n$ increases, all of the coverage
proportions converge to the nominal level $1- \alpha$.

(2) For Uniform(0,\,1) distribution,  $I_1$ has  better coverage
proportions. In 8/16 of the cases, the average width of $I_2$ is
slightly shorter than that of $I_1$. In 8/16 of the cases, $I_2$ and
$I_1$ have the same average width.

(3) For Weibull(1,\,10) distribution, $I_1$ has better coverage
proportion and width.

\vskip .1in

In the $j$th simulation,  $\{W_{ni}\}$ and $\{V_{ni}\}$ were
calculated according to (4.4)
and (5.1)
respectively. Then the  sample means of $\{W_{ni}\}$ and
$\{V_{ni}\}$ are the same (see the proof of Theorem 4.1).
But the sample variance $s_W^2(j)$ of $\{W_{ni}\}$ and the sample
variance $s_V^2(j)$ of $\{V_{ni}\}$ are different. Let
$$s_W^2 =\frac{1}{N} \sum_{j=1}^Ns_W^2(j), \ \ \ s_V^2 =\frac{1}{N}\sum_{j=1}^Ns_V^2(j).$$
Table 3 shows that the sample variance  of $\{W_{ni}\}$ is smaller
than that of  $\{V_{ni}\}$. This is proved in Remark 3.1 for the
population variances.

\begin{table*}
\caption{The sample variances of $\{W_{ni}\}$ and $\{V_{ni}\}$,
$\th_0=\E\,Y$}
\begin{tabular}{cccccc}
\hline \multicolumn{6}{l}{20\% censoring proportion}\\\cline{1-6}
                  &\multicolumn{2}{c}{Uniform(0,\,1)} &\,& \multicolumn{2}{c}{Weibull(1,\,10)}\\
  Sample Size $n$ &   $s_W^2$   &  $s_V^2$  &  & $s_W^2$  &   $s_V^2$   \\\hline
   20             &    0.0935    &    0.1121  &  &   0.0157  &   0.0163  \\
   40             &    0.0938    &    0.1115  &  &   0.0157  &   0.0162  \\
   60             &    0.0937    &    0.1107  &  &   0.0157  &   0.0161  \\
   80             &    0.0934    &    0.1100  &  &   0.0158  &   0.0162  \\\cline{1-6}
   \multicolumn{6}{l}{30\% censoring proportion}\\\cline{1-6}
                  &\multicolumn{2}{c}{Uniform(0,1)} &\,& \multicolumn{2}{c}{Weibull(1,\,10)}\\
  Sample Size $n$ &  $s_W^2$    &  $s_V^2$  &  &  $s_W^2$ &    $s_V^2$   \\\hline
   20             &    0.1005    &    0.1386  &  &   0.0175  &   0.0185  \\
   40             &    0.1013    &    0.1401  &  &   0.0176  &   0.0184  \\
   60             &    0.1016    &    0.1402  &  &   0.0176  &   0.0183  \\
   80             &    0.1012    &    0.1393  &  &   0.0176  &   0.0183  \\
\cline{1-6}
\end{tabular}
\end{table*}

\vskip .1in

{\bf Scenario II:}  Let $g(x,\th)=(x-t_0 -\th)I[x\geq t_0]$. Then by
solving the equation
  $\E\,g(Y,\th)=0$, we obtain the mean residual life of $Y$,
\begin{equation}
 \th_0= \E (Y-t_0 |Y\geq t_0)= \frac{\E(Y-t_0)I[Y\geq t_0]}{P(Y\geq t_0)}, \label{5.3}
\end{equation}
as studied in Qin \& Zhao (2007). Let  $Y$ have a Weibull\,(1,\,10)
distribution and $C$ have an Exp($\lambda$) distribution. By setting
 $\lambda=4.3$ and $\lambda=2.7$, we achieved 20\% and 30\% censoring proportions respectively. As in Scenario I, each simulation was repeated  $N=2\times 10^4$
times. The coverage proportion of the $N$ data sets and their
average width were calculated. The results are summarized in Table 4
and Table 5, respectively.
\begin{table*}
\caption{The coverage proportion and average width of confidence
intervals for $\th_0 =  \E (Y-t_0 |Y\geq t_0)$ under the assumptions
of $Y \sim $ Weibull(1,\,10), 20\% censoring proportion, and
$1-\alpha=0.90$. }
\begin{tabular}{ccccccccccc}
\hline
 Sample Size $n$  &  Method       & \multicolumn{4}{c}{Coverage Ratio} &\,& \multicolumn{4}{c}{Average Width}\\\cline{1-11}
                  &               &     \multicolumn{4}{c}{$P(Y \geq t_0)$}     &\,& \multicolumn{4}{c}{$P(Y \geq t_0)$}     \\
                  &               & $0.90$ & $0.70$ & $0.50$ & $0.30$ &  & $0.90$ & $0.70$ & $0.50$ & $0.30$ \\\hline
 20               &  $I_2$        & 0.878  & 0.851  & 0.795  & 0.659  &  & 0.074  & 0.062  & 0.054  & 0.044  \\
                  &  $I_1$        & 0.881  & 0.863  & 0.820  & 0.701  &  & 0.074  & 0.062  & 0.056  & 0.048  \\
\cline{1-11}
 40               &  $I_2$        & 0.889  & 0.878  & 0.859  & 0.800  &  & 0.053  & 0.046  & 0.042  & 0.039  \\
                  &  $I_1$        & 0.891  & 0.884  & 0.874  & 0.833  &  & 0.053  & 0.046  & 0.043  & 0.041  \\
\cline{1-11}
 60               &  $I_2$        & 0.897  & 0.892  & 0.877  & 0.839  &  & 0.044  & 0.037  & 0.035  & 0.034  \\
                  &  $I_1$        & 0.898  & 0.897  & 0.888  & 0.863  &  & 0.044  & 0.038  & 0.035  & 0.035  \\
\cline{1-11}
 80               &  $I_2$        & 0.895  & 0.888  & 0.884  & 0.853  &  & 0.038  & 0.033  & 0.031  & 0.030  \\
                  &  $I_1$        & 0.896  & 0.892  & 0.892  & 0.871  &  & 0.038  & 0.033  & 0.031  & 0.031  \\
\cline{1-11}
\end{tabular}
\end{table*}
\begin{table*}
\caption{The coverage ratio and average width of confidence
intervals for  $\th_0= \E (Y-t_0 |Y\geq t_0)$, under the assumptions
$Y\sim$ Weibull (1,\,10), 30\% censoring proportion and
$1-\alpha=0.90$.}
\begin{tabular}{ccccccccccc}
\hline
 Sample Size $n$  &  Method & \multicolumn{4}{c}{Coverage Ratio} &\,& \multicolumn{4}{c}{Average Width }\\\cline{1-11}
                 &               &     \multicolumn{4}{c}{$P(Y \geq t_0)$}      &\,& \multicolumn{4}{c}{$P(Y \geq t_0)$}         \\
                 &               & $0.90$ & $0.70$ & $0.50$& $0.30$   &  & $0.90$ & $0.70$ & $0.50$ & $0.30$ \\\hline
 20              &  $I_2$        & 0.864  & 0.833  & 0.760  & 0.605   &  & 0.079  & 0.065  & 0.055  & 0.043  \\
                 &  $I_1$        & 0.872  & 0.851  & 0.793  & 0.659   &  & 0.079  & 0.065  & 0.058  & 0.048  \\
\cline{1-11}
 40              &  $I_2$        & 0.887  & 0.872  & 0.846  & 0.777   &  & 0.057  & 0.048  & 0.045  & 0.041  \\
                 &  $I_1$        & 0.891  & 0.882  & 0.867  & 0.818   &  & 0.057  & 0.049  & 0.046  & 0.043  \\
\cline{1-11}
 60              &  $I_2$        & 0.892  & 0.888  & 0.870  & 0.822   &  & 0.046  & 0.040  & 0.037  & 0.036  \\
                 &  $I_1$        & 0.895  & 0.895  & 0.884  & 0.851   &  & 0.046  & 0.040  & 0.038  & 0.037  \\
\cline{1-11}
 80              &  $I_2$        & 0.892  & 0.888  & 0.878  & 0.845   &  & 0.040  & 0.035  & 0.033  & 0.032  \\
                 &  $I_1$        & 0.895  & 0.895  & 0.887  & 0.869   &  & 0.040  & 0.035  & 0.033  & 0.033  \\
\cline{1-11}
\end{tabular}
\end{table*}

The following are noted from Tables 4 and 5.

(1) As the sample size $n$ increases,  all of the coverage
proportions increase and are close to the nominal levels.

(2) The coverage proportions of $I_1$ are much better than that of
$I_2$.

(3) In 15/32 of the  cases, the average width of $I_2$ is slightly
shorter than that of $I_1$. In 17/32 of the cases, $I_2$ and $I_1$
have the same average width.

\section{Appendix}
\def\theequation{6. \arabic{equation}}
\setcounter{equation}{0}

\renewcommand{\proofname}{Proof of Lemma 3.2}
\begin{proof}
Put $\eta_n=o_p(1)$. By assumptions, for any $\vv>0$ and $\delta
>0$, there exist $M>0$, $b< b_H$ and $n_0>1$ such that for $n\geq
n_0$,
 $ P(|V_n|\geq M)\leq \delta$, $P( |h_n(b)|\geq \vv/M) \leq
P(|h(b)|\geq \vv/M) +\delta/2 \leq \delta$ and $P(|\eta_n|\geq
\vv)<\delta$ . It follows that for $n\geq n_0$,
\begin{eqnarray*}
P(|S_n|\geq 2\vv)&\leq & P(|\eta_n|\geq \vv) + P(|V_n h_n(b)|\geq \vv) \\
&\leq & \delta + P(|V_nh_n(b)| \geq \vv, |h_n(b)|\leq \vv/M ) +
\delta \\
&\leq & P(|V_n|\geq M)+2\delta\\
& \leq& 3\delta.
\end{eqnarray*}
\end{proof}
\renewcommand{\proofname}{Proof of Lemma 4.2}

\begin{proof}
The differences $W_{ni} - W_i$ in eq. (4.4) and (4.1)
can be expressed in terms of
\begin{eqnarray*}
 &&\gamma_i= \frac{\xi(Z_i)\delta_i}{\overline G_n(Z_i-)}-\frac{\xi(Z_i)\delta_i}{\overline G(Z_i)},\\
 &&\eta_i=\frac{{\overline \delta}_i}{{\overline H_n}(Z_i-)}\psi_n(Z_i)-\frac{{\overline \delta}_i}{{\overline H}(Z_i)}\psi(Z_i), \\
 && \nu_i= \int \psi_n(s)\frac{ I[Z_i\geq s]}{\overline H_n^2(s-)}dH_n^0(s)-\int \psi(s)\frac{ I[Z_i\geq s]}{\overline H^2(s)}dH^0(s),
\end{eqnarray*}
as
$$ W_{ni} - W_i = \gamma_i + \eta_i - \nu_i.$$
Applying an elementary inequality $(a+b+c)^2\leq 3(a^2+b^2+c^2)$, we
obtain
\begin{equation}
(W_{ni}-W_i )^2 =(\gamma_i + \eta_i - \nu_i)^2\leq 3(\gamma_i^2 +
\eta_i^2 + \nu_i^2). \label{6.1}
\end{equation}

The lemma will be proven by showing that the sample means of
$\gamma^2_i, \eta_i^2$ and $ \nu_i^2$ tend to zero in probability.
The proofs will be presented in (A), (B) and (C) below.

\vskip .12in

(A) \ The sample mean of $\gamma_i^2$ is $o_p(1)$.

Proof:  Let $G_n(x)$ be the K-M estimator defined in (2.3)
and $b< b_H$. Then as $n\to \infty$,
\begin{equation}
U_n=\sup_{s\leq b}\frac{|G_n(s-)-G(s)|}{\overline G_n(s)} = o_p(1),
\ \ V_n = \sup_{s\leq \max\{Z_i\}}\frac{|G_n(s-)-G(s)|}{\overline
G_n(s-)} = O_p(1). \label{6.2}
\end{equation}
See Zhou (1992).  To apply this result, we shall in the following
proof split the integrals into two intervals $[0, b]$ and $(b, b_H]$
accordingly.

For any $b<b_H$, using (2.2),
we have
\begin{eqnarray}
\frac{1}{n}\sum_{i=1}^n \gamma_i^2
 &=& \int \Big(\frac{\xi(s)}{\overline G_n(s-)}-\frac{\xi(s)}{\overline G(s)}\Big)^2 \,\rd H_n^1(s)\notag \\
 &\leq&  U_n^2 \int_0^b \frac{\xi^2(s)}{\overline G^2(s)}\,\rd H_n^1(s)+V_n^2 \int_b^{b_H}\frac{\xi^2(s)}{\overline G^2(s)} \,\rd H_n^1(s) \notag \\
 &=& o_p(1)O_p(1) + O_p(1)h_n(b) = o_p(1), \  \text{as} \ n\to \infty,
\label{6.3}
\end{eqnarray}
where $U_n^2$ and $V_n^2$ are given by (6.2) and
\begin{eqnarray}
 h_n(b)=\int_b^{b_H}\frac{\xi^2(s)}{\overline G^2(s)} \,\rd H_n^1(s).
\label{6.4}
\end{eqnarray}
Recall that $h_n(b)$ is explained in (3.12).
It was shown that $h_n(b)$ satisfies the conditions in Lemma 3.2.
The proof follows by invoking Lemma 3.2.

\vskip .15in

(B) \ The sample mean of $\eta_i^2$ is $o_p(1)$.

Proof: For $b<b_H$, define
$$ T_n(b,t]
 =\int_b^t\frac{\psi_n^2(s)}{\overline H_n^2(s-)} \,\rd H^0_n(s), \ \ S_n(b,t]
 =\int_b^t\frac{\psi^2(s)}{\overline H^2(s)}\,\rd H^0_n(s).$$
Observe that
$$\psi_n^2(s)=\Big(\int_{u\geq s} \xi(u) \,\rd F_n(u)\Big)^2 \leq\overline F_n(s-) \int_{u\geq s} \xi^2(u) \,\rd F_n(u),$$
and $F_n$ and $G_n$ have no common jumps. It follows that
\begin{eqnarray}
T_n(b, b_H]
 &\leq&\int_b^{b_H} \Big(\int_{u\geq s} \xi^2(u) \,\rd F_n(u)\Big) \overline F_n^2(s-)\frac{\,\rd G_n(s)}{\overline H_n^2(s-)}\notag \\
 &\leq&\int_b^{b_H} \Big( \int_{u\geq s} \xi^2(u) \,\rd F_n(u) \Big)\,\rd \Big(\frac{1}{\overline G_n(s)}\Big) \notag \notag \\
 &\leq&\lim_{s\to b_H}\frac{1}{\overline G_n(s)}\int_s^{b_H}\xi^2(u)\,\rd F_n(u)+\int_b^{b_H}\frac{\xi^2(s)}{\overline G_n(s-)}\,\rd F_n(s)\notag \\
 &\leq&2\int_b^{b_H}\frac{\xi^2(s)}{\overline G_n(s-)} \,\rd F_n(s) \notag \\
 &\leq&2\int_b^{b_H}\Big( \frac{1}{\overline G_n(s-)}-\frac{1}{\overline G(s)}+\frac{1}{\overline G(s)}\Big)^2 \xi^2(s)\,\rd H_n^1(s) \notag \\
 &=&o_p(1)+O_p(1) h_n(b). \label{6.5}
\end{eqnarray}
The first inequality follows from (2.6)
the second and the third from integration by parts, the fifth from
(2.5)
and the last equality from $(a+b)^2\leq 2(a^2+b^2)$, (6.3) and
(6.4).


By the same token, we conclude that
$S_n(b,b_H]=o_p(1)+O_p(1)h_n(b)$.

Write
$$\xi = \xi^+ - \xi^-, \ \psi_n(x) =  \int_{s\geq x} \xi^{+}(s) \,\rd F_n(s) - \int_{s\geq x} \xi^{-}(s) \,\rd F_n(s),$$
where $\xi^+$ and $\xi^-$ are the positive and negative part of
$\xi$. Define monotone functions:

$$\psi_n^{\pm}(x) = \int_{s\geq x} \xi^{\pm} (s) \,\rd F_n(s), \
\psi^{\pm}(x) =  \int_{s\geq x} \xi^{\pm}(s) \,\rd F(s).$$
$\psi_n^{\pm}(x)$ converges to $\psi^{\pm}(x)$ almost surely for $x
\in [0, b_H]$ as shown by Stute \& Wang (1993).  Furthermore, by
Lemma 4.1, the convergence is uniform on $[0, b_H]$.

From these we conclude the uniform convergence of $\psi_n$ to
$\psi$,
\begin{eqnarray}
 \sup_{0\leq x \leq b_H } |\psi_n(x)-\psi(x)| = o(1), \ \text{a.s.}.  \label{6.6}
\end{eqnarray}
Therefore, for $b< b_H$,
$$\sup_{s\leq b} \Big (\frac{\psi_n(s)} {\overline H_n(s-)}
-\frac{\psi(s)}{\overline H(s)}\Big)^2 \to 0,\ \text{a.s.}.$$

Applying $(a+b)^2\leq 2a^2 +2b^2$ and Lemma 3.2, we have
\begin{eqnarray}
\frac{1}{n}\sum_{i=1}^n \eta_i^2
 &=& \int_0^b+\int_b^{b_H}\Big(\frac{\psi_n(s)}{\overline H_n(s-)}-\frac{\psi(s)}{\overline H(s)}\Big)^2 \,\rd H^0_n(s)\notag \\
 &\leq& \int_0^b \Big(\frac{\psi_n(s)}{\overline H_n(s-)}-\frac{\psi(s)}{\overline H(s)}\Big)^2 \,\rd H^0_n(s) + 2T_n(b,b_H]+2S_n(b,b_H]\notag\\
 &=&o_p(1)+O_p(1) h_n(b) +O_p(1) h_n(b) =o_p(1). \label{6.7}
\end{eqnarray}

\vskip .15in

(C) \  The sample mean of $\nu_i^2$ is  $o_p(1)$.

Proof: Write, for $ 0 \leq a < t$,
$$B_n(a,t]=\int_a^t\frac{\psi_n(s)\,\rd H_n^0(s)}{\overline H_n^2(s-)}, \ \
D(a,t]=\int_a^t\frac{\psi(s)\,\rd H^0(s)}{\overline H^2(s)}.$$ Then,
for $b<b_H$, we have
\begin{eqnarray*}
\Delta_n^2
 &\equiv&\int_b^{b_H} B_n^2(b,t]\,\rd H_n(t)=\int_b^{b_H}B_n^2(b,t]\,\rd(-\overline H_n(t)) \\
 &\leq &\overline H_n(b) B_n^2(b,b] + 2 \int_b^{b_H}\overline H_n(t-) B_n(b,t] \frac{\psi_n(t)}{\overline H_n^2(t-)}\,\rd H_n^0(t)\\
 &\leq & 0+ 2 \Big( \int_b^{b_H} B_n^2(b,t] \,\rd H_n(t) \Big)^{1/2}\Big(\int_b^{b_H}\frac{\psi_n^2(t)}{\overline H_n^2(t-)} \,\rd H_n^0(t)\Big)^{1/2}\\
 &=& \Delta_n 2[T_n(b,b_H)]^{1/2}.
\end{eqnarray*}
The second term in the first inequality is obtained using the
Lebesgue-Stieltjes integration by parts.

Applying (6.5),
we get
$$\int_b^{b_H} B_n^2(b,t]\,\rd H_n(t)=\Delta_n^2\leq 4 T_n(b,b_H]= o_p(1)+O_p(1)h_n(b).$$
Similarly, for $S(b,b_H]=\E S_n(b,b_H] $, we have
$$ \int_b^{b_H} D^2(b,t]\,\rd H_n(t)
\leq 4 S(b,b_H] + o_p(1) = o_p(1)+ O_p(1)h_n(b).$$ Applying the
uniform convergence of $\psi_n$ to $\psi$ (see(6.6)),
we conclude that  for any $ b < b_H$, with probability 1, $ B_n(0,t]
\to D(0,t]$ uniformly on $[0, b]$. It follows that $B_n(0,b]\to
D(0,b]$, and
\begin{eqnarray*}
\frac{1}{n}\sum_{i=1}^n \nu_i^2
&=&\int_0^b + \int_b^{b_H} \big ( B_n(0,t]-D(0,t]\big)^2 \,\rd H_n(t) \notag\\
&=&o_p(1) +\int_b^{b_H} \big( B_n(0,b]-D(0,b] + B_n(b, t]-D(b,t]\big )^2 \,\rd H_n(t)\notag\\
&\leq&o_p(1) + 4 \int_b^{b_H} B_n^2(b,t] \,\rd H_n(t) + 4\int_b^{b_H}D^2(b,t) \,\rd H_n(t) \notag\\
&=& o_p(1) + O_p(1)h_n(b) + O_p(1)h_n(b) =o_p(1).
\end{eqnarray*}

Now, we  prove result (2) of the lemma. Introduce
$A_{ni}=W_i{\overline H}_n(Z_i-) {\overline H}(Z_i)$. From (6.1)
we get
$$ (V_{ni}-A_{ni})^2 \leq 3(\gamma_i^2 + \eta_i^2 + \nu_i^2){\overline H}^2_n(Z_i-) {\overline H}^2(Z_i).$$
Similar to (6.3) and (6.7),
we have
\begin{eqnarray*}
& &\frac{1}{n}\sum_{i=1}^n \gamma_i^2 {\overline
H}^2_n(Z_i-){\overline
 H}^2(Z_i)\\
& =&\int_0^{b_H}\Big(\frac{\xi(s)}{\overline
G_n(s-)}-\frac{\xi(s)}{\overline G(s)}\Big)^2{\overline
H}^2_n(s-){\overline H}^2(s)\,\rd H_n^1(s)
 =o(1),\ \text{a.s.}, \\
 & &\frac{1}{n}\sum_{i=1}^n \eta_i^2{\overline H}^2_n(Z_i-) {\overline
 H}^2(Z_i)\\
& =&\int_0^{b_H} \Big (\frac{\psi_n(s)} {\overline
H_n(s-)}-\frac{\psi(s)}{\overline H(s)}\Big)^2 {\overline
H}^2_n(s-){\overline H}^2(s)\,\rd H^0_n(s)
 =o(1),\ \text{a.s.}.
\end{eqnarray*}

Since for any $b<b_H$, with probability 1,
$$ |B_n(0,t]-D(0,t]|{\overline H_n(t-)} {\overline H}(t) \to 0$$
uniformly on $[0, b]$, and
$$\sup_{t\leq b_H}|B_n(0,t]- D(0,t]|{\overline H_n(t-)}{\overline H}(t)$$
is bounded by some constant, it follows that
\begin{eqnarray*}
& & \frac{1}{n}\sum_{i=1}^n \nu_i^2{\overline H}^2_n(Z_i-)
{\overline
H}^2(Z_i) \\
& =&\int_0^{b_H} \big [ (B_n(0,t]-D(0,t]){\overline H}_n(t-)
{\overline H}(t) \big ]^2 \,\rd H_n(t)
 =o(1),\ \text{a.s.}.
\end{eqnarray*}
Now we get
$$ \frac{1}{n}\sum_{i=1}^{n}(V_{ni}-A_{ni})^2
 \leq 3\frac{1}{n}\sum\limits_{i=1}^{n}(\gamma_i^2 + \eta_i^2+\nu_i^2){\overline H}^2_n(Z_i-) {\overline H}^2(Z_i)
 =o(1),\ \text{a.s.}.$$
At last, we have
\begin{eqnarray}
\frac{1}{n}\sum\limits_{i=1}^{n}(V_{ni} -V_i)^2
 &\leq& \frac{2}{n}\sum\limits_{i=1}^{n}(V_{ni} -A_{ni})^2+\frac{2}{n}\sum\limits_{i=1}^{n}(A_{ni}-V_i)^2 \notag\\
 &=&o(1)+\frac{2}{n}\sum\limits_{i=1}^{n}(H(Z_i)-H_n(Z_i-))^2W_i^2\overline H^2(Z_i)\notag\\
 &=&o(1),\ \text{a.s.}. \notag
\end{eqnarray}

\end{proof}

\renewcommand{\proofname}{Proof of Lemma 4.3}

\begin{proof}
Since $W_i$ are i.i.d. random variables with zero mean and finite
variance $\sig^2$, hence $\max\limits_{1\leq i\leq n} |W_i|=o_p(
\sqrt{n})$. It follows from Lemma 4.2 that
\begin{eqnarray}
\max\limits_{1\leq i\leq n}|W_{ni}|
 &\leq& \Big(\max\limits_{1\leq i\leq n} |W_{ni}-W_i|^2\Big)^{1/2}+\max\limits_{1\leq i\leq n} |W_i| \notag \\
 &\leq& \sqrt{n}\Big(\frac{1}{n}\sum_{i=1}^n (W_{ni}-W_i)^2\Big)^{1/2} +o_p(\sqrt{n})\notag \\
 &=& o_p( \sqrt{n}). \label{6.8}
\end{eqnarray}

Note that $W_{ni}^2 $ is bounded by
$$W^2_{i} +(W_i-W_{ni})^2 -2|W_i(W_i-W_{ni})|\leq W_{ni}^2
 \leq W_i^2+(W_{ni}-W_i)^2+2|W_i(W_i-W_{ni})|.$$
By Lemma 4.2, we get
$$\lim_{n\to\infty}\frac{1}{n}\sum_{i=1}^{n}W^2_{ni}=\sig^2 + o_p(1).$$

The last result follows from  Theorem 3.1, $\xi(x)=g(x,\th_0)$ and
(4.6).


\end{proof}

\end{document}